\newcommand{\curl}{\nabla \times}
\renewcommand{\div}{\nabla \cdot}
\newcommand{\grad}{\nabla}
\newcommand{\enorm}[1]{{\left\vert\kern-0.1ex\left\vert\kern-0.1ex\left\vert #1 \right\vert\kern-0.1ex\right\vert\kern-0.1ex\right\vert}}
\newtheorem{theorem}{Theorem}
\newtheorem{lemma}[theorem]{Lemma}
\newtheorem{corollary}{Corollary}[theorem]
\numberwithin{equation}{section}
\numberwithin{theorem}{section}
\journal{}
\begin{document}
\begin{frontmatter}
\title{A multipoint vorticity mixed finite element method\\ for incompressible Stokes flow}
\author[polimi]{Wietse M. Boon\texorpdfstring{\corref{cor1}}{*}}
\author[polimi]{Alessio Fumagalli}
\cortext[cor1]{wietsemarijn.boon@polimi.it}
\address[polimi]{{Politecnico di Milano},
            {Piazza Leonardo da Vinci 32},
            {Milano},
            {Italy}}
\begin{abstract}
    We propose a mixed finite element method for Stokes flow with one degree of freedom per element and facet of simplicial grids. The method is derived by considering the vorticity-velocity-pressure formulation and eliminating the vorticity locally through the use of a quadrature rule. The discrete solution is pointwise divergence-free and the method is pressure robust. The theoretically derived convergence rates are confirmed by numerical experiments.
\end{abstract}

\begin{keyword}
    multipoint vorticity \sep mixed finite element method \sep Stokes flow \sep hybridization
    \MSC[2020] 65N12 \sep 76M10 \sep 65N30
\end{keyword}
\end{frontmatter}

\section{Introduction}
\label{sec:introduction}

The velocity field that weakly solves a Stokes flow problem is typically sought in the Hilbert space $H^1$.
Stable, mixed finite element methods (MFEM) that satisfy this regularity constraint can be constructed through the use of discrete Stokes complexes \cite{falk2013stokes}.
However, we aim to construct a MFEM of low order using the Raviart-Thomas finite element pair, which lacks the regularity necessary to fit into the Stokes complex framework. Instead, we choose to conform to a three-field formulation of the Stokes equations, known as the \emph{vorticity-velocity-pressure} formulation \cite{dubois2003vorticity}. Recently, this formulation was discretized using the framework of finite element exterior calculus \cite{hanot2021arbitrary}.

The three-field formulation thus introduces the vorticity as a third variable and the main idea proposed in this letter is to subsequently eliminate this variable, locally, using a low-order quadrature rule. The technique is inspired by the \emph{multipoint flux MFEM} (MF-MFEM) \cite{wheeler2006multipoint}, and we inherit the nomenclature by referring to our method as a \emph{multipoint vorticity MFEM} (MV-MFEM). The method is linearly convergent with the velocity in $H(\div, \Omega)$ and the (locally post-computed) vorticity in $H(\curl, \Omega)$.

We remark that multipoint stress MFEM (MS-MFEM)\cite{caucao2020multipoint} was recently proposed for Stokes flow, based on the stress-velocity-vorticity formulation. In that case, the stress and vorticity are eliminated, which leads to a method that preserves local momentum balance but lacks local mass conservation, in contrast to MV-MFEM. The hybridization techniques in this family of methods were recently identified and generalized as local approximations of the exterior coderivative \cite{lee2018local}.

In this letter, we combine a low-order discretization of the three-field formulation from \cite{hanot2021arbitrary} with the computation of local coderivatives from \cite{lee2018local}. Focusing on the lowest order method, we show that the use of the localized quadrature rule does not impact the linear convergence. Moreover, the pressure variable is unaffected, as is the curl of the vorticity and the divergence of the velocity. In fact, in two dimensions, the only influence of the quadrature rule is a second-order error in the velocity. These results are shown theoretically in \Cref{sec:analysis} and experimentally in \Cref{sec:numerical_results}.

\subsection{Notation}

Let $L^2(\Omega)$ denote the space of square-integrable functions on $\Omega$ and let $H(\div,\Omega)$ and $H(\curl,\Omega)$ be the spaces of square-integrable vector fields on $\Omega$ with square-integrable divergence and curl, respectively.
The $H^s(\Sigma)$ norm is denoted $\| \cdot \|_{s, \Sigma}$ and we use the short-hand notation $\| \cdot \|_\Sigma := \| \cdot \|_{0, \Sigma}$ and $\| \cdot \| := \| \cdot \|_{0, \Omega}$.
We use $\langle \cdot, \cdot \rangle_\Omega$ to represent the $L^2$-inner product on $\Omega$ for scalar and vector-valued functions.
Angled brackets $\langle \cdot, \cdot \rangle$ denote duality pairings, $X^*$ is the dual of a Hilbert space $X$, and $B'$ is the adjoint of an operator $B$.
Parentheses $(\cdot, \cdot)$ are used to represent tuples.
In 2D, the curl operator is defined as $\curl r = [\partial_2, \ - \partial_1]^T r$ for scalar fields $r$ and as $\nabla \times q := \partial_1 q_2 -\partial_2 q_1$ for vector-fields $q$. The cross product with a vector $\nu$ is defined analogously.
The notation $a \lesssim b$, respectively $a \gtrsim b$, implies that a bounded constant $C > 0$ exists, independent of the mesh size, such that $Ca \le b$, respectively $a \ge Cb$. Moreover, we denote $a \eqsim b$ if $a \lesssim b$ and $b \lesssim a$.

\section{The continuous problem}
\label{sec:governing_equations}

Let $\Omega \subset \mathbb{R}^n$ be a simply connected, Lipschitz domain with $n \in \{2, 3\}$. Let $\mu > 0$ be the constant viscosity and $g \in (L^2(\Omega))^n$ a given body force. The governing equations for incompressible Stokes flow are given by momentum and mass balance for the velocity $q$ and pressure $p$, and we supplement these with the definition of the vorticity $r$:
\begin{subequations} \label{eqs: strong form}
\begin{align}
    - \div (2 \mu \varepsilon q - pI) &= g, &
    \div q &= 0, &
    r &= \mu \curl q.
\end{align}
Here, $\varepsilon$ is the symmetric gradient and $I \in \mathbb{R}^{n \times n}$ the identity tensor. We consider the following boundary conditions:
\begin{align}
    \nu \times q &= \nu \times q_0, &
    p &= p_0, &
    \text{on } &\partial_p \Omega, &
    \nu \times r &= 0, &
    \nu \cdot q &= 0, &
    \text{on } &\partial_q \Omega,
\end{align}
\end{subequations}
in which $\partial_p \Omega \cup \partial_q \Omega$ is a disjoint decomposition of the boundary $\partial \Omega$ and $p_0, q_0$ are given. In order to ensure uniqueness of $p$ and $q$, we assume that $\partial_p \Omega \ne \emptyset$ and $\| \nu \cdot \phi \|_{\partial_q \Omega} + \| \nu \times \phi \|_{\partial_p \Omega} > 0$ for all rigid body motions $\phi \ne 0$.

We continue by deriving the variational formulation of \eqref{eqs: strong form} in terms of $(r, q, p)$. First, we follow \cite{hanot2021arbitrary} and note the following calculus identity for solenoidal fields $q$:
\begin{align*}
    - \div ( \varepsilon q )
    = \frac12 \curl (\curl q)
    - \grad (\div q)
    = \frac12 \curl (\curl q).
\end{align*}
Using the definition of the vorticity $r := \mu \curl q$, this identity allows us to rewrite the momentum balance equation as
\begin{align*}
    \curl r + \div (pI) &= g.
\end{align*}

Second, let us define the composite Hilbert space $X := R \times Q \times P$ with
\begin{align*}
    R &:= \{ r \in H(\curl, \Omega) \mid \nu \times r|_{\partial_q \Omega} = 0 \}, &
    Q &:= \{ q \in H(\div, \Omega) \mid \nu \cdot q|_{\partial_q \Omega} = 0 \}, &
    P &:= L^2(\Omega).
\end{align*}

We are now ready to introduce the test functions $(\tilde r, \tilde q, \tilde p) \in X$ and apply integration by parts, which leads us to the variational, three-field formulation of problem~\eqref{eqs: strong form}:
Find $(r, q, p) \in X$ such that
\begin{subequations} \label{eqs: weak form}
\begin{align}
    \langle \mu^{-1} r, \tilde r \rangle_\Omega
    - \langle q, \curl \tilde r \rangle_\Omega
    \phantom{\ - \langle p, \div \tilde q \rangle_\Omega}
    &= \langle q_0, \nu \times \tilde r \rangle_{\partial_p \Omega}, &
    \forall \tilde r &\in R, \\
    \langle \curl r, \tilde q \rangle_\Omega
    \phantom{\ - \langle q, \curl \tilde r \rangle_\Omega}
    - \langle p, \div \tilde q \rangle_\Omega
    &= \langle g, \tilde q \rangle_\Omega
    - \langle p_0, \nu \cdot \tilde q \rangle_{\partial_p \Omega}, &
    \forall \tilde q &\in Q, \\
    \langle \div q, \tilde p \rangle_\Omega
    \phantom{\ - \langle p, \div \tilde q \rangle_\Omega}
    &= 0, &
    \forall \tilde p &\in P.
\end{align}
\end{subequations}

To highlight the structure of this problem, we define the operators $A, B_r, B_q$, and functionals $f_r, f_q$ such that:
\begin{align*}
    \langle A r, \tilde r \rangle &:= \langle \mu^{-1} r, \tilde r \rangle_\Omega, &
    \langle B_r r, \tilde q \rangle &:= \langle \curl r, \tilde q \rangle_\Omega, &
    \langle B_q q, \tilde p \rangle &:= \langle \div q, \tilde p \rangle_\Omega, \\
    \langle f_r, \tilde r \rangle &:= - \langle q_0, \tilde r \rangle_{\partial_q \Omega}, &
    \langle f_q, \tilde q \rangle &:=
    \langle g, \tilde q \rangle_\Omega
    - \langle p_0, \nu \cdot \tilde q \rangle_{\partial_q \Omega},
\end{align*}
for all $(r, q, p), (\tilde r, \tilde q, \tilde p) \in X$. Recall that unsubscripted, angled brackets $\langle \cdot, \cdot \rangle$ denote duality pairings. We then identify problem~\eqref{eqs: weak form} to be of the form
$\mathcal{B}(r,q,p) = f$ with $\mathcal{B}: X \to X^*$ and $f \in X^*$ given by
\begin{align*}
    \mathcal{B} &:=
    \begin{bmatrix}
        A & -B_r' \\
        B_r & & -B_q' \\
        & B_q \\
    \end{bmatrix}, &
    f &:=
    \begin{bmatrix}
        f_r \\ f_q \\ 0
    \end{bmatrix}.
\end{align*}

For the analysis of this problem, we define the energy norm as
\begin{align} \label{eq: enorm}
    \enorm{(r, q, p)}^2
    := \| r \|^2 + \| \curl r \|^2
    + \| q \|^2 + \| \div q \|^2
    + \| p \|^2.
\end{align}
It was shown in \cite[Lem.~3.4]{hanot2021arbitrary} that problem~\eqref{eqs: weak form} admits a unique solution that is bounded in the energy norm \eqref{eq: enorm}.

\section{Mixed finite element discretization}
\label{sec:discretization}

\subsection{A three-field mixed finite element method}

Let $\Omega_h$ be a shape-regular, simplicial tesselation of $\Omega$. For $n = 3$, the finite element spaces are chosen as the linear N\'ed\'elec elements of the second kind, the Raviart-Thomas space of lowest order, and the piecewise constants:
\begin{align*} 
    R_h &:= \mathbb{N}_1 \cap R, &
    Q_h &:= \mathbb{RT}_0 \cap Q, &
    P_h &:= \mathbb{P}_0 \cap P.
\end{align*}
We clarify that this choice of spaces leads to one degree of freedom per face for the velocity, one degree of freedom per element for the pressure, but two degrees of freedom per edge for the vorticity. This choice facilitates the hybridization introduced in the next section.
For $n=2$, we only adapt the vorticity space to the linear Lagrange elements, $R_h := \mathbb{L}_1 \cap R$.
It is important to note that $\curl R_h \subseteq Q_h$ and $\div Q_h = P_h$.
Finally, we define the composite space as $X_h := R_h \times Q_h \times P_h$.

The \emph{three-field mixed finite element method} (3F-MFEM) of problem~\eqref{eqs: weak form} is: Find $\hat x \in X_h$ such that
\begin{align} \label{eqs: discrete full}
    \langle \mathcal{B} \hat x, \tilde x \rangle
    &= \langle f, \tilde x \rangle, &
    \forall \tilde x \in X_h.
\end{align}

\begin{lemma}[{\cite[Thm.~3.7]{hanot2021arbitrary}}] \label{lem: conv of 3field}
    The 3F-MFEM is stable and convergent with the error bounded by
    \begin{align}
        \enorm{\hat x - x}
        \lesssim
        \inf_{\tilde x \in X_h}
        \enorm{\tilde x - x}.
    \end{align}
\end{lemma}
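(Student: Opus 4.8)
The statement is a quasi-optimality (C\'ea-type) estimate for the saddle-point problem \eqref{eqs: discrete full}, so the plan is to invoke the abstract theory of mixed methods: once the bilinear form $\langle \mathcal{B} \cdot, \cdot \rangle$ is shown to be bounded on $X_h \times X_h$ and to satisfy a discrete inf-sup condition with a constant independent of the mesh size, the bound $\enorm{\hat x - x} \lesssim \inf_{\tilde x \in X_h} \enorm{\tilde x - x}$ follows from the standard quasi-optimality theorem for bounded, inf-sup stable problems. Boundedness is the easy ingredient: each nonzero block $A$, $B_r$, $B_q$ is an $L^2$ pairing, so a Cauchy--Schwarz inequality bounds every term of $\langle \mathcal{B} x_h, y_h \rangle$ by the energy norm \eqref{eq: enorm}, giving $\langle \mathcal{B} x_h, y_h \rangle \lesssim \enorm{x_h} \, \enorm{y_h}$.

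The core of the argument is the discrete inf-sup condition
\[
    \inf_{x_h \in X_h} \sup_{y_h \in X_h}
    \frac{\langle \mathcal{B} x_h, y_h \rangle}{\enorm{x_h} \, \enorm{y_h}}
    \gtrsim 1,
\]
with the hidden constant independent of $h$. The strategy is to exploit that the discrete spaces form an exact subcomplex reproducing the structure behind the continuous well-posedness of \cite[Lem.~3.4]{hanot2021arbitrary}: the inclusions $\curl R_h \subseteq Q_h$ and $\div Q_h = P_h$ are the discrete analogues of the continuous exact sequence. Reading $\mathcal{B}$ as a nested saddle point, I would verify the three standard hypotheses. First, coercivity of $A$ on the discrete kernel $\ker B_r \cap R_h$ is immediate, since $\curl r_h = 0$ there forces $\enorm{r_h}^2 = \|r_h\|^2$ while $\langle A r_h, r_h \rangle = \mu^{-1} \|r_h\|^2$. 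Second, the outer inf-sup for $B_q$ holds because $\div Q_h = P_h$ is surjective. Third, the inner inf-sup for $B_r$ follows from the discrete exactness $\curl R_h = \{ q_h \in Q_h : \div q_h = 0 \}$, valid since $\Omega$ is simply connected, so that $B_r$ surjects onto the discrete divergence-free velocities.

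The main obstacle is transferring this stability to $X_h$ uniformly in $h$. The cleanest route is a Fortin argument based on bounded commuting projections: the canonical interpolants onto $R_h$, $Q_h = \mathbb{RT}_0 \cap Q$, and $P_h = \mathbb{P}_0 \cap P$ satisfy the commuting-diagram property with $\curl$ and $\div$, so post-composing a continuous supremizer with these projections yields a discrete supremizer whose energy norm is controlled, by shape regularity, uniformly in $h$. This bounds the discrete inf-sup constant from below by the continuous one up to the (mesh-independent) norms of the projections. Combining boundedness, the kernel coercivity, and the two inf-sup conditions within the block structure of $\mathcal{B}$ then yields the uniform discrete stability, from which the claimed C\'ea estimate follows.
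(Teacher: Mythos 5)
The paper never proves this lemma: it is imported verbatim from the cited reference (Thm.~3.7 of the Hanot paper), so your proposal can only be compared against the strategy of that work, which is indeed the finite-element-exterior-calculus argument you sketch. Your high-level plan — boundedness by Cauchy--Schwarz, a uniform discrete inf-sup condition, then Babu\v{s}ka quasi-optimality — is the right one, and the three structural ingredients you identify (coercivity of $A$ on $\ker B_r \cap R_h$, surjectivity of $\div$ from $Q_h$ onto $P_h$, and discrete exactness $\curl R_h = \ker(\div) \cap Q_h$ on a simply connected domain) are exactly the ones the stability proof needs.

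There is, however, one step that fails as stated: the Fortin argument via the \emph{canonical} interpolants. The canonical N\'ed\'elec and Raviart--Thomas interpolants are not bounded on $H(\curl,\Omega)$ and $H(\div,\Omega)$ — indeed they are not even defined there, since their degrees of freedom are edge and face integrals that require extra regularity (e.g.\ $H^{1/2+\epsilon}$). So ``post-composing a continuous supremizer with these projections'' is not a legitimate operation: the supremizer supplied by continuous stability lies only in $H(\curl) \times H(\div) \times L^2$. The standard repairs are either (i) smoothed, $L^2$-bounded commuting projections (Sch\"oberl, Christiansen--Winther), which is what the FEEC stability theory actually rests on, or (ii) regular ($H^1$) potentials and Bogovskii-type liftings, so that the supremizers are smooth enough for the canonical interpolants to apply. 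Relatedly, your final ``combining'' step hides real work: $\mathcal{B}$ is a twofold saddle point, so the two-field Brezzi theorem does not apply off the shelf; assembling your three conditions into a uniform inf-sup for the full form requires discrete Hodge decompositions and discrete Poincar\'e inequalities (both of which again come from the bounded commuting projections), e.g.\ to control the component of $r_h$ orthogonal to $\ker(\curl)$ and to bound the discrete potential $s_h$ with $\curl s_h$ equal to the divergence-free part of $q_h$.
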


\subsection{A quadrature rule for the vorticity}
\label{sub:hybridization_of_the_vorticity}

Following \cite[Eq.~(4.6)]{lee2018local}, we introduce a quadrature rule for the vorticity variable and define the associated norm:
\begin{align} \label{eq: quad rule}
    \langle r, \tilde r \rangle_h
    &:= \sum_{\omega \in \Omega_h} \frac{|\omega|}{n + 1} \sum_{x \in \mathcal{V}(\omega)} (r_\omega \cdot \tilde r_\omega)(x), &
    \| r \|_h^2
    &:= \langle r, r \rangle_h, &
    \forall r, \tilde r \in \mathbb{P}_1^{k_n}(\Omega_h) \supseteq R_h,
\end{align}
with $r_\omega$ the restriction of $r$ on the simplex $\omega$ and $\mathcal{V}(\omega)$ the set of vertices of $\omega$.
We use $\mathbb{P}_l(\Omega_h)$ to denote the space of discontinous, elementwise polynomials of order $l$ and the coefficient is given by $k_3 = 3$ and $k_2 = 1$.
Finally, let us define the auxiliary space $W_h := \mathbb{P}_0^{k_n}(\Omega_h)$ containing elementwise constant (vector) fields.

Two key properties of the quadrature rule are highlighted in the following lemma.
\begin{lemma}[{\cite[Thm.~4.1]{lee2018local}}] \label{lem: equivalence}
    The quadrature rule from \eqref{eq: quad rule} satisfies the following properties
    \begin{align} \label{eqs: quadrature props}
        \| r \|_h &\eqsim \| r \|, &
        \forall r &\in R_h, &
        \langle \tilde r, w \rangle_h
        &= \langle \tilde r, w \rangle, &
        \forall \tilde r &\in \mathbb{P}_1^{k_n}, w \in W_h.
    \end{align}
\end{lemma}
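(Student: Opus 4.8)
The lemma has two claims. The second is the easier one and I would prove it first. The assertion $\langle \tilde r, w\rangle_h = \langle \tilde r, w\rangle$ for $\tilde r \in \mathbb{P}_1^{k_n}$ and $w \in W_h$ is an exactness statement about the quadrature rule. On a single simplex $\omega$, the product $\tilde r_\omega \cdot w_\omega$ is a polynomial of degree at most $1$ in each component (since $w$ is constant and $\tilde r$ is affine), so the integrand is affine. The vertex-based rule $\frac{|\omega|}{n+1}\sum_{x\in\mathcal V(\omega)}(\cdot)(x)$ is precisely the trapezoidal-type rule that integrates affine functions exactly on a simplex: for an affine function $\phi$ one has $\int_\omega \phi = \frac{|\omega|}{n+1}\sum_{x\in\mathcal V(\omega)}\phi(x)$, because the barycenter equals the average of the vertices and $\int_\omega \phi = |\omega|\,\phi(\text{barycenter})$. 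Summing over all $\omega \in \Omega_h$ gives the global identity. I would state the single-simplex exactness as the key computation and then sum.

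For the first claim, the norm equivalence $\|r\|_h \eqsim \|r\|$ on $R_h$, the strategy is a standard scaling/reference-element argument. Both $\|\cdot\|_h$ and $\|\cdot\|$ restrict to elementwise quantities, so it suffices to establish $\|r_\omega\|_{h,\omega} \eqsim \|r_\omega\|_\omega$ on each simplex $\omega$ with constants independent of $\omega$. I would map $\omega$ to a fixed reference simplex $\hat\omega$ via an affine transformation. On $\hat\omega$, both expressions are norms on the finite-dimensional space of affine vector fields (degree-$1$ polynomials): $\|\cdot\|$ is the genuine $L^2$ norm, and $\|\cdot\|_h$ is the discrete norm built from vertex values. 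The discrete expression is indeed a norm and not merely a seminorm because an affine function vanishing at all $n+1$ vertices of a nondegenerate simplex is identically zero; hence $\|\hat r\|_{h,\hat\omega}=0$ forces $\hat r \equiv 0$. Since all norms on a finite-dimensional space are equivalent, we obtain $\|\hat r\|_{h,\hat\omega}\eqsim\|\hat r\|_{\hat\omega}$ with fixed constants.

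The main obstacle is tracking how the equivalence constants transform under the affine map back to the physical simplex $\omega$, and verifying they remain uniform over the mesh. Both the $L^2$ norm and the quadrature norm scale by the Jacobian factor $|\omega|$ (the vertex-value sum in \eqref{eq: quad rule} carries the explicit weight $|\omega|/(n+1)$, and the $L^2$ norm scales by $|\omega|$ under the volume change), so the leading volume factors cancel in the ratio $\|r_\omega\|_{h,\omega}/\|r_\omega\|_\omega$. What remains is that the components of affine vector fields transform without any dependence on the shape of $\omega$, since the quadrature rule only samples pointwise vertex values and does not involve derivatives or the linear part of the affine map. Consequently the reference-element equivalence transfers directly, and shape-regularity of $\Omega_h$ is enough to keep the constants bounded uniformly. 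Summing the squared elementwise equivalences over all $\omega \in \Omega_h$ yields the global statement $\|r\|_h \eqsim \|r\|$.
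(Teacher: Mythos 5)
Your proposal is correct, but note that the paper itself offers no proof of this lemma at all: it is imported verbatim by citation to \cite[Thm.~4.1]{lee2018local}, so there is no internal argument to compare against. What you have supplied is the standard self-contained proof, and both halves hold up. The exactness claim follows exactly as you say: on each simplex $\tilde r_\omega \cdot w_\omega$ is affine, and the vertex rule integrates affine functions exactly because the vertex average is the value at the barycenter and the barycenter rule is exact for affine integrands; summing over $\Omega_h$ (which is legitimate since the quadrature is defined elementwise on the broken space, so interelement discontinuity is irrelevant) gives the identity. The norm equivalence also goes through by your scaling argument, and in fact your proof yields it on all of $\mathbb{P}_1^{k_n}(\Omega_h)$, which is slightly stronger than the statement for $R_h$. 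One refinement worth making explicit: since the pullback $\hat r = r_\omega \circ F_\omega$ involves only point values and no derivatives, both $\| r_\omega \|_\omega^2$ and $\| r_\omega \|_{h,\omega}^2$ scale by the same factor $|\omega|$ (up to the fixed constant $|\hat\omega|$), so the equivalence constants coming from the finite-dimensional norm equivalence on the reference simplex transfer with \emph{no} dependence on the element shape whatsoever --- shape regularity of $\Omega_h$, which you invoke at the end, is not actually needed for this lemma (it is needed elsewhere in the paper, e.g.\ for the interpolation estimates \eqref{eqs: interpolants}, but not here). Your observation that positive-definiteness of the discrete norm rests on the fact that an affine function vanishing at all $n+1$ vertices of a nondegenerate simplex vanishes identically is exactly the right justification.
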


 Let $A_h: R_h \to R_h^*$ be the operator associated with the quadrature rule such that $\langle A_h r, \tilde r \rangle := \langle \mu^{-1} r, \tilde r \rangle_h$. Next, we replace $A$ by $A_h$ in the system which leads us to define a new operator $\mathcal{B}_h: X_h \to X_h^*$:
\begin{align} \label{eq: def B_h}
    \langle \mathcal{B}_h (r, q, p), (\tilde r, \tilde q, \tilde p) \rangle
    &:= \langle \mathcal{B} (r, q, p), (\tilde r, \tilde q, \tilde p) \rangle
    + \langle A_h r, \tilde r \rangle
    - \langle A r, \tilde r \rangle, &
    \forall (r, q, p), (\tilde r, \tilde q, \tilde p) & \in X_h.
\end{align}
With this substitution in place, we consider the augmented system: Find $x_h \in X_h$ such that
\begin{align} \label{eqs: discrete three-field}
    \langle \mathcal{B}_h x_h, \tilde x \rangle
    &= \langle f, \tilde x \rangle, &
    \forall \tilde x \in X_h.
\end{align}

Similar to \eqref{eq: enorm}, the natural energy norm for this problem is given by
\begin{align} \label{eq: enorm h}
    \enorm{(r, q, p)}_h^2
    := \| r \|_h^2 + \| \curl r \|^2
    + \| q \|^2 + \| \div q \|^2
    + \| p \|^2,
\end{align}
and \Cref{lem: equivalence} directly gives us the equivalence relation $\enorm{x} \eqsim \enorm{x}_h$ for all $x \in X_h$.

\begin{lemma}[Well-posedness] \label{lem: discrete well-posed}
    Problem~\eqref{eqs: discrete three-field} admits a unique and bounded solution. In particular, the operator $\mathcal{B}_h$ from \eqref{eq: def B_h} satisfies
    \begin{align}
        \sup_{\tilde x \in X_h} \frac{\langle \mathcal{B}_h x_h, \tilde x \rangle}{\enorm{\tilde x}_h}
        &\eqsim
        \enorm{x_h}_h, &
        \forall x_h &\in X_h.
    \end{align}
\end{lemma}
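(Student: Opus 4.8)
The plan is to establish the claimed two-sided bound as the combination of a continuity estimate (the $\lesssim$ direction) and a discrete inf-sup estimate (the $\gtrsim$ direction); on the finite-dimensional space $X_h$ the latter already forces $\mathcal{B}_h$ to be injective, hence invertible, which yields the asserted existence, uniqueness and boundedness. The guiding observation is that $\mathcal{B}_h$ differs from $\mathcal{B}$ only in the vorticity block, where $A$ is replaced by $A_h$, and that by \Cref{lem: equivalence} we have $\langle A_h r, r \rangle = \mu^{-1}\|r\|_h^2 \eqsim \mu^{-1}\|r\|^2 = \langle A r, r\rangle$ together with $\enorm{x} \eqsim \enorm{x}_h$ on $X_h$. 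I would therefore re-run the stability argument behind \Cref{lem: conv of 3field} with $A_h$ in place of $A$, tracking that the vorticity block enters only through coercivity and continuity on $R_h$, both of which survive the substitution up to the equivalence constants.

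For continuity I would simply apply Cauchy--Schwarz term by term in $\langle \mathcal{B}_h x_h, \tilde x\rangle$: the term $\langle A_h r, \tilde r\rangle$ is bounded by $\mu^{-1}\|r\|_h\|\tilde r\|_h$, and the remaining terms by products such as $\|\curl r\|\|\tilde q\|$ and $\|q\|\|\div\tilde q\|$, each factor being a component of the energy norm. This gives $\langle \mathcal{B}_h x_h, \tilde x\rangle \lesssim \enorm{x_h}_h \enorm{\tilde x}_h$ and hence the $\lesssim$ direction.

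The core is the lower bound, which I would obtain by constructing a single test function as a weighted sum $\tilde x = \alpha_0 x_h + \alpha_1(0,\curl r,0) + \alpha_2(0,0,\div q) + \alpha_3(0,-q_p,0) + \alpha_4(-\rho,0,0)$. Testing with $x_h$ itself annihilates the skew-symmetric off-diagonal terms and leaves $\langle A_h r,r\rangle = \mu^{-1}\|r\|_h^2 \gtrsim \|r\|_h^2$, controlling the vorticity. The choice $(0,\curl r,0)$, admissible since $\curl R_h \subseteq Q_h$, produces $\|\curl r\|^2$ because $\div\curl r = 0$; the choice $(0,0,\div q)$ produces $\|\div q\|^2$ since $\div Q_h = P_h$. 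Using a bounded right inverse of $\div$ I would pick $q_p\in Q_h$ with $\div q_p = p$ and $\|q_p\|\lesssim\|p\|$, so that $(0,-q_p,0)$ yields $\|p\|^2$ up to the cross term $\langle\curl r,q_p\rangle$, absorbed by Young's inequality. Finally, to recover $\|q\|^2$, I would split $q = q_{df} + q_\perp$ into its $L^2$-orthogonal divergence-free and divergence-carrying parts, bound $\|q_\perp\|\lesssim\|\div q\|$, and use discrete exactness of $R_h\xrightarrow{\curl}Q_h$ to write $q_{df} = \curl\rho$ with $\|\rho\|\lesssim\|q_{df}\|$; then $(-\rho,0,0)$ contributes $\|q_{df}\|^2 - \langle A_h r,\rho\rangle$, the last term again absorbed. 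Choosing the weights $\alpha_i$ small enough in the order listed makes all cross terms absorbable and yields $\langle\mathcal{B}_h x_h,\tilde x\rangle\gtrsim\enorm{x_h}_h^2$ with $\enorm{\tilde x}_h\lesssim\enorm{x_h}_h$.

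I expect the main obstacle to be the control of $\|q\|$, since it is the only component not directly read off from a diagonal pairing and it relies on the exactness of the discrete complex $R_h\to Q_h\to P_h$ together with uniform discrete Poincar\'e constants and the $\div$ inf-sup condition. These structural facts are, however, exactly what underlies \Cref{lem: conv of 3field}, so the only genuinely new point is to check that replacing $A$ by $A_h$ affects the argument solely through the equivalence $\|\cdot\|_h\eqsim\|\cdot\|$ on $R_h$; the stability constants then degrade only by that equivalence factor, and the lower bound together with $\dim X_h < \infty$ delivers well-posedness.
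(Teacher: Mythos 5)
Your proposal is correct and takes essentially the same approach as the paper: continuity via term-wise Cauchy--Schwarz, and the lower bound by re-running the three-field stability (inf-sup) argument with the norm equivalence of \Cref{lem: equivalence} absorbing the replacement of $A$ by $A_h$. The only differences are presentational: the paper delegates the test-function construction that you write out explicitly to \cite[Lem.~3.4--3.5]{hanot2021arbitrary}, and it concludes via the Babu\v{s}ka--Lax--Milgram theorem where you use injectivity plus finite-dimensionality of $X_h$.
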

\begin{proof}
    For the continuity bound ``$\lesssim$'', we note that $\mathcal{B}$ is continuous in the energy norm $\enorm{\cdot}$, which is equivalent to $\enorm{\cdot}_h$. Moreover, $A_h$ is continuous in the norm $\| \cdot \|_h$ and so $\mathcal{B}_h$, given by \eqref{eq: def B_h}, is continuous in the norm $\enorm{\cdot}_h$. For the lower bound ``$\gtrsim$'', we follow the same steps as in the proof of \cite[Lem.~3.4]{hanot2021arbitrary} and use the equivalence relation from \Cref{lem: equivalence} where necessary. Finally, we follow \cite[Lem.~3.5]{hanot2021arbitrary} to obtain the final requirements to invoke the Babu\v{s}ka-Lax-Milgram theorem for existence of the unique and bounded solution.
\end{proof}

\subsection{A multipoint vorticity mixed finite element method}
\label{sub: MV-MFA}

The quadrature rule \eqref{eq: quad rule} is local in the sense that $\langle r_i, \tilde r_j \rangle_h$ is only non-zero for basis functions $r_i, \tilde r_j \in R_h$ that have a degree of freedom at the same vertex. $A_h$ can thus be inverted by solving local systems around the vertices which allows us to eliminate the vorticity $r_h$. After elimination, we obtain the \emph{multipoint vorticity mixed finite element method} (MV-MFEM): Find $(q_h, p_h) \in Q_h \times P_h$ such that
\begin{align} \label{eqs: MV-MFE}
    \begin{bmatrix}
        B_r A_h^{-1} B_r' & -B_q' \\
        B_q \\
    \end{bmatrix}
    \begin{bmatrix}
        q_h \\ p_h
    \end{bmatrix}
    &=
    \begin{bmatrix}
        f_q - B_r A_h^{-1} f_r \\ 0
    \end{bmatrix}.
\end{align}

\section{A priori analysis of MV-MFEM}
\label{sec:analysis}

For the analysis of the MV-MFEM \eqref{eqs: MV-MFE}, we consider its equivalent formulation \eqref{eqs: discrete three-field}. We start by introducing suitable interpolants.
Let $\Pi_R$ and $\Pi_Q$ be the canonical interpolants onto the finite element spaces $R_h, Q_h$, respectively, defined for sufficiently regular $r \in R$ and $q \in Q$. Similarly, let $\Pi_P$ and $\Pi_W$ be the $L^2$ projections onto $P_h$ and $W_h$, respectively. These interpolants satisfy $\langle \curl (I - \Pi_R) r, q \rangle_\Omega = 0$ and $\langle \div (I - \Pi_Q) q, p \rangle_\Omega = 0$ for all $(r, q, p) \in X_h$, and have the approximation properties
\begin{subequations} \label{eqs: interpolants}
\begin{align}
    \| (I - \Pi_R) r \| + \| \curl ((I - \Pi_R) r) \|
    &\lesssim h~(\| r \|_{1, \Omega} + \| \curl r \|_{1, \Omega}), &
    \| (I - \Pi_P) p \|
    &\lesssim h~\| p \|_{1, \Omega}, \\
    \| (I - \Pi_Q) q \| + \| \div ((I - \Pi_Q) q) \|
    &\lesssim h~(\| q \|_{1, \Omega} + \| \div q \|_{1, \Omega}), &
    \| (I - \Pi_W) r \|
    &\lesssim h~\| r \|_{1, \Omega}.
\end{align}
\end{subequations}
In turn, the composite interpolant $\Pi_X$ is given by $\Pi_X (r, q, p) := (\Pi_R r, \Pi_Q q, \Pi_P p)$ for sufficiently regular $(r, q, p) \in X$.

\begin{theorem}[Convergence]\label{teo:conv}
    If the continuous solution $x \in X$ to \eqref{eqs: weak form} is sufficiently regular, then the discrete solution $x_h \in X_h$ to \eqref{eqs: discrete three-field} converges linearly in the energy norm \eqref{eq: enorm}:
    \begin{align*}
        \enorm{x_h - x} \lesssim h.
    \end{align*}
\end{theorem}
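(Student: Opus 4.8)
The plan is to introduce the solution $\hat x \in X_h$ of the auxiliary 3F-MFEM \eqref{eqs: discrete full} as a bridge between $x_h$ and $x$, and to split the error through the triangle inequality as $\enorm{x_h - x} \le \enorm{x_h - \hat x} + \enorm{\hat x - x}$. The second term is the standard discretization error: by the quasi-optimality of \Cref{lem: conv of 3field} combined with the approximation properties \eqref{eqs: interpolants} applied to the composite interpolant, it is bounded by $\inf_{\tilde x \in X_h} \enorm{\tilde x - x} \le \enorm{\Pi_X x - x} \lesssim h$, provided $x$ is sufficiently regular. It therefore remains to estimate the \emph{consistency error} $\enorm{x_h - \hat x}$ caused by replacing $A$ with the quadrature operator $A_h$; since both $x_h$ and $\hat x$ lie in $X_h$, I would control this term by discrete stability rather than by approximation.

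For this, I would invoke the inf--sup stability of $\mathcal{B}_h$ from \Cref{lem: discrete well-posed}, writing $\enorm{x_h - \hat x}_h \lesssim \sup_{\tilde x \in X_h} \langle \mathcal{B}_h(x_h - \hat x), \tilde x \rangle / \enorm{\tilde x}_h$. Because $x_h$ solves \eqref{eqs: discrete three-field} and $\hat x$ solves \eqref{eqs: discrete full} against the same right-hand side $f$, the numerator collapses: using $\langle \mathcal{B} \hat x, \tilde x \rangle = \langle f, \tilde x \rangle$ and the definition \eqref{eq: def B_h} of $\mathcal{B}_h$, one finds $\langle \mathcal{B}_h(x_h - \hat x), \tilde x \rangle = \langle A \hat r, \tilde r \rangle - \langle A_h \hat r, \tilde r \rangle = \mu^{-1}\big( \langle \hat r, \tilde r \rangle_\Omega - \langle \hat r, \tilde r \rangle_h \big)$. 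Everything thus reduces to estimating the single quadrature error $E(\hat r, \tilde r) := \langle \hat r, \tilde r \rangle_h - \langle \hat r, \tilde r \rangle_\Omega$ for $\hat r, \tilde r \in R_h$.

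The key step, and the one demanding the most care, is to show that $E(\hat r, \tilde r)$ carries a \emph{full} power of $h$. Here I would exploit the exactness property $\langle \tilde r, w \rangle_h = \langle \tilde r, w \rangle$ for $w \in W_h$ from \Cref{lem: equivalence}. Inserting the $L^2$-projection $\Pi_W$ and using this exactness together with the orthogonality $\langle (I - \Pi_W)\hat r, w \rangle = 0$ for $w \in W_h$, both the quadrature and the exact inner products reduce to their $(I - \Pi_W)$-components, so that $E(\hat r, \tilde r) = \langle (I-\Pi_W)\hat r, (I-\Pi_W)\tilde r \rangle_h - \langle (I-\Pi_W)\hat r, (I-\Pi_W)\tilde r \rangle_\Omega$. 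Applying Cauchy--Schwarz to each term and the norm equivalence $\| \cdot \|_h \eqsim \| \cdot \|$ of \Cref{lem: equivalence}, which holds elementwise on all of $\mathbb{P}_1^{k_n}(\Omega_h)$ and hence on the range of $(I - \Pi_W)$, then yields $|E(\hat r, \tilde r)| \lesssim \| (I-\Pi_W)\hat r \| \, \| (I-\Pi_W)\tilde r \|$.

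Finally I would bound the two factors asymmetrically. For the test function the crude estimate $\| (I-\Pi_W)\tilde r \| \le \| \tilde r \| \le \enorm{\tilde x} \eqsim \enorm{\tilde x}_h$ suffices, since $\Pi_W$ is an $L^2$-contraction. For $\hat r$ I would split $(I-\Pi_W)\hat r = (I-\Pi_W)(\hat r - r) + (I-\Pi_W) r$, controlling the first piece by $\| \hat r - r \| \le \enorm{\hat x - x} \lesssim h$ and the second by the approximation property $\| (I-\Pi_W) r \| \lesssim h\,\| r \|_{1,\Omega}$ from \eqref{eqs: interpolants}. This gives $\| (I-\Pi_W)\hat r \| \lesssim h$, whence $|E(\hat r, \tilde r)| \lesssim h\,\enorm{\tilde x}_h$ and $\enorm{x_h - \hat x}_h \lesssim h$; combining with $\enorm{\cdot} \eqsim \enorm{\cdot}_h$ on $X_h$ and the triangle inequality closes the argument. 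The main obstacle is precisely the extraction of two $(I - \Pi_W)$ factors in the quadrature error: without playing the quadrature's exactness against piecewise constants on \emph{both} arguments one obtains only $O(1)$, and it is the orthogonality of $(I-\Pi_W)$ to $W_h$ that upgrades this to the needed $O(h)$.
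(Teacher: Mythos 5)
Your proposal is correct and takes essentially the same route as the paper: bridging through the 3F-MFEM solution $\hat x$, invoking the inf--sup stability of $\mathcal{B}_h$ from \Cref{lem: discrete well-posed} so that the error collapses to the quadrature consistency term $\langle \hat r, \tilde r \rangle_h - \langle \hat r, \tilde r \rangle_\Omega$, and then using the exactness of the quadrature against $W_h$ (\Cref{lem: equivalence}) together with the interpolation bounds \eqref{eqs: interpolants} and \Cref{lem: conv of 3field}. The only (harmless) difference is that you insert $\Pi_W$ into \emph{both} arguments, whereas the paper inserts $\Pi_W r$ (the projection of the exact solution) into the first argument only and bounds the test factor crudely by $\| \tilde r \|_h$ --- so your closing claim that exactness must be played on both arguments to avoid an $O(1)$ bound is not accurate (one insertion suffices for $O(h)$; the two-factor trick is what yields the $O(h^2)$ estimate in the two-dimensional case), though this does not affect the validity of your argument.
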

\begin{proof}
    We closely follow \cite{lee2018local} and start, as in \cite[Eq.~(3.11)]{lee2018local}, by evaluating the difference between $x_h := (r_h, q_h, p_h) \in X_h$, the solution to \eqref{eqs: discrete three-field}, and $\hat x := (\hat r, \hat q, \hat p) \in X_h$, the solution to \eqref{eqs: discrete full}. Using the equivalences from \Cref{lem: equivalence} and \Cref{lem: discrete well-posed}, we derive
    \begin{subequations} \label{eqs: convergence proof}
    \begin{align}
        \enorm{\hat x - x_h}
        \eqsim
        \enorm{\hat x - x_h}_h
        &\eqsim
        \sup_{\tilde x \in X_h} \frac{\langle \mathcal{B}_h(\hat x - x_h), \tilde x \rangle}{\enorm{\tilde x}_h}
        =
        \sup_{\tilde x \in X_h} \frac{\langle \mathcal{B}_h \hat x, \tilde x \rangle - \langle f, \tilde x \rangle}{\enorm{\tilde x}_h}
        =
        \sup_{\tilde r \in R_h} \frac{\langle A_h \hat r, \tilde r \rangle - \langle A \hat r, \tilde r \rangle}{\enorm{(\tilde r, 0, 0)}_h}.
    \end{align}

    Next, we follow the steps from \cite[Thm.~3.2]{lee2018local} to further bound the final term. Scaling the numerator by $\mu$ for ease of presentation, we use \Cref{lem: equivalence} and a Cauchy-Schwarz inequality to obtain
    \begin{align}
        \mu \left( \langle A_h \hat r, \tilde r \rangle - \langle A \hat r, \tilde r \rangle \right)
        = \langle \hat r, \tilde r \rangle_h - \langle \hat r, \tilde r \rangle_\Omega
        &= \langle \hat r - \Pi_W r, \tilde r \rangle_h - \langle \hat r - \Pi_W r, \tilde r \rangle_\Omega
        \lesssim  \| \hat r - \Pi_W r \| \ \| \tilde r \|_h.
    \end{align}
    \end{subequations}

    Combining \eqref{eqs: convergence proof}, the triangle inequality leads us to
    \begin{align*}
        \enorm{\hat x - x_h}
        \lesssim \mu^{-1} \sup_{\tilde r \in R_h} \frac{\| \hat r - \Pi_W r \| \ \| \tilde r \|_h}{\enorm{(\tilde r, 0, 0)}_h}
        \lesssim \| \hat r - \Pi_W r \|
        \le \| \hat r  - r \| + \| r - \Pi_W r \|.
    \end{align*}

    Next, we use another triangle inequality and \Cref{lem: conv of 3field} to derive
    \begin{align*}
        \enorm{x_h - x}
        \le
        \enorm{x_h - \hat x}
        + \enorm{\hat x - x}
        \lesssim
        \| r - \Pi_W r \|
        + \inf_{\tilde x \in X_h}
        \enorm{x - \tilde x}
        \le
        \| r - \Pi_W r \|
        + \enorm{x - \Pi_X x}.
    \end{align*}
    Finally, the properties of the interpolants \eqref{eqs: interpolants} and the assumed regularity of $x$ provides the result.
\end{proof}

Several advantageous properties of 3F-MFEM are preserved despite the use of the quadrature rule and we summarize these in the following two lemmas.
\begin{lemma}[Solenoidal] \label{lem: Solenoidal}
    The discrete solution $q_h \in R_h$ is point-wise solenoidal.
\end{lemma}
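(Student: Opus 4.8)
The plan is to read off the mass balance equation from the MV-MFEM system and exploit the fact that the divergence of a Raviart--Thomas field lands exactly in the discrete pressure space. The second block row of \eqref{eqs: MV-MFE} states that $B_q q_h = 0$, which by the definition $\langle B_q q, \tilde p \rangle := \langle \div q, \tilde p \rangle_\Omega$ means $\langle \div q_h, \tilde p \rangle_\Omega = 0$ for every $\tilde p \in P_h$. Crucially, the elimination of the vorticity in passing from the augmented system \eqref{eqs: discrete three-field} to \eqref{eqs: MV-MFE} modifies only the vorticity block via $A_h$, so the mass conservation row is inherited unchanged and the quadrature rule plays no role here.

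First I would observe that, since $q_h \in Q_h = \mathbb{RT}_0 \cap Q$ and the spaces were chosen so that $\div Q_h = P_h$, the divergence $\div q_h$ is itself an element of $P_h$. This surjectivity is exactly the structural property recorded immediately after the definition of the finite element spaces, and it is what makes $\div q_h$ an admissible test function.

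Next I would take the particular choice $\tilde p = \div q_h \in P_h$ in the mass balance equation. This gives $\| \div q_h \|^2 = \langle \div q_h, \div q_h \rangle_\Omega = 0$, so $\div q_h = 0$ in $L^2(\Omega)$. Because $\div q_h$ is piecewise constant, being an element of $\mathbb{P}_0$, vanishing in the $L^2$ norm forces it to vanish pointwise on every simplex, which is the assertion.

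I do not expect a genuine obstacle: the argument is immediate once the surjectivity $\div Q_h = P_h$ is invoked to license testing with the divergence itself. The only point worth stating explicitly is the final implication that an $L^2$-null piecewise constant is pointwise null, together with the remark above that the pointwise solenoidal property is unaffected by the local vorticity elimination.
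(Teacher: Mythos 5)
Your proof is correct and follows the same route as the paper: read off the discrete mass balance $\langle \div q_h, \tilde p \rangle_\Omega = 0$ for all $\tilde p \in P_h$, use $\div Q_h \subseteq P_h$ to take $\tilde p = \div q_h$ as test function, and conclude pointwise vanishing since $\div q_h$ is piecewise constant. You merely spell out the steps that the paper's two-line proof leaves implicit (and you correctly read the paper's ``$\nabla \cdot R_h \subseteq P_h$'' as the intended $\div Q_h \subseteq P_h$).
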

\begin{proof}
    The solution satisfies $\langle \div q_h, \tilde p \rangle_\Omega = 0$, for all $\tilde p \in P_h$. Since $\nabla \cdot R_h \subseteq P_h$, the result follows.
\end{proof}

\begin{lemma}[Pressure-robust] \label{lem: Pressure-robust}
    If the force term $g$ is perturbed by $\delta g = \nabla \phi$ for some $\phi$, then the solution $x_h$ is only perturbed in the component $p_h$.
\end{lemma}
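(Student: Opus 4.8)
The plan is to exploit linearity of problem~\eqref{eqs: discrete three-field} and to exhibit the perturbation of the solution explicitly. Since $g$ enters the system only through the functional $f_q$, a perturbation $\delta g = \grad \phi$ induces a perturbation $\delta f = (0, \delta f_q, 0)$ of the right-hand side, with $\langle \delta f_q, \tilde q \rangle = \langle \grad \phi, \tilde q \rangle_\Omega$. By linearity, the corresponding change $\delta x_h = (\delta r_h, \delta q_h, \delta p_h) \in X_h$ of the solution is the unique element satisfying $\langle \mathcal{B}_h \delta x_h, \tilde x \rangle = \langle \delta f, \tilde x \rangle$ for all $\tilde x \in X_h$, where uniqueness is guaranteed by \Cref{lem: discrete well-posed}.

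Next I would propose the ansatz $\delta x_h = (0, 0, \delta p_h)$ and verify that a suitable $\delta p_h \in P_h$ makes it a solution. Writing out the three rows of $\mathcal{B}_h$, the equation tested against $\tilde r \in R_h$ contains the terms $\langle A_h \delta r_h, \tilde r \rangle$ and $\langle \curl \tilde r, \delta q_h \rangle_\Omega$, while the equation tested against $\tilde p \in P_h$ contains $\langle \div \delta q_h, \tilde p \rangle_\Omega$; all of these vanish identically once $\delta r_h = \delta q_h = 0$. The only non-trivial condition comes from the momentum row tested against $\tilde q \in Q_h$, which becomes $-\langle \div \tilde q, \delta p_h \rangle_\Omega = \langle \grad \phi, \tilde q \rangle_\Omega$.

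To solve this, I would integrate by parts on the right-hand side and use the inclusion $\div Q_h = P_h$ together with the defining property of the $L^2$-projection $\Pi_P$. Choosing $\delta p_h := \Pi_P \phi$, the identity $\langle \div \tilde q, \Pi_P \phi - \phi \rangle_\Omega = 0$ holds because $\div \tilde q \in P_h$, so the volume contributions match exactly. Having produced a solution of the prescribed form, \Cref{lem: discrete well-posed} forces it to be the unique perturbation, whence $\delta r_h = 0$ and $\delta q_h = 0$, and only $p_h$ is altered.

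The main obstacle is the boundary term $\langle \phi, \nu \cdot \tilde q \rangle_{\partial_p \Omega}$ produced by integration by parts, since $\nu \cdot \tilde q$ need not vanish on $\partial_p \Omega$ for $\tilde q \in Q_h$. Unlike the volume term, this contribution cannot be rewritten through $\div$, so it must be shown to be harmless before the projection argument closes. I expect to handle it by restricting to perturbations with $\phi|_{\partial_p \Omega} = 0$, or equivalently by simultaneously adjusting the datum $p_0$ so that the boundary contribution is absorbed into the pressure functional; with this understood, the argument above goes through and establishes pressure robustness.
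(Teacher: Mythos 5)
Your proof is correct and takes a genuinely different route from the paper. The paper's proof is a one-line citation: it observes that the proof of Theorem~5.3 in the reference \cite{hanot2021arbitrary} never uses the vorticity mass matrix in an essential way, so replacing $A$ by $A_h$ leaves it intact. You instead give a self-contained argument: linearity reduces the claim to the perturbed system $\mathcal{B}_h \delta x_h = \delta f$, the ansatz $\delta x_h = (0,0,\Pi_P \phi)$ is verified row by row using $\div Q_h = P_h$ and the defining property of the $L^2$-projection, and uniqueness from \Cref{lem: discrete well-posed} then pins down the perturbation. This makes the mechanism explicit, and in particular makes transparent exactly why the quadrature rule is irrelevant: with $\delta r_h = 0$ the term $\langle A_h \delta r_h, \tilde r\rangle$ vanishes identically, whether one uses $A$ or $A_h$ --- which is precisely the observation the paper delegates to the citation. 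Your proof also surfaces a real subtlety that the lemma's statement (and the paper's proof) glosses over: the integration by parts produces the boundary term $\langle \phi, \nu \cdot \tilde q\rangle_{\partial_p \Omega}$, which does not vanish for general $\phi$, and without the hypothesis $\phi|_{\partial_p \Omega} = 0$ (or a compensating change of the datum $p_0$) the claim is false --- a gradient perturbation with nonzero boundary trace acts like a change of pressure boundary condition and does alter the velocity. Your explicit restriction is therefore not a defect of your argument but a needed precision of the statement; presumably the cited theorem carries the analogous hypothesis. The trade-off: the paper's proof is shorter and inherits its hypotheses silently, while yours is longer but verifiable within this paper alone and makes the assumptions on $\phi$ explicit.
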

\begin{proof}
    The proof of \cite[Thm 5.3]{hanot2021arbitrary} is unaffected by the substitution of $A$ by $A_h$, so it directly applies.
\end{proof}

In addition to the divergence of the velocity, certain components of the solution remain unchanged after including the quadrature rule. We formally present these invariants in the following theorem.
\begin{theorem}[Invariants] \label{thm: invariants}
    The introduction of the quadrature rule $A_h$ does not influence the pressure variable, nor the curl of the vorticity, i.e. $p_h = \hat p$ and $\curl r_h = \curl \hat r$.
\end{theorem}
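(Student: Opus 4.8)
The plan is to compare the solutions $\hat x = (\hat r, \hat q, \hat p)$ of \eqref{eqs: discrete full} and $x_h = (r_h, q_h, p_h)$ of \eqref{eqs: discrete three-field} directly, exploiting that the substitution of $A$ by $A_h$ in \eqref{eq: def B_h} perturbs only the block tested against $\tilde r$. I would set $e_r := r_h - \hat r$ and $e_p := p_h - \hat p$ and test both problems with $\tilde x = (0, \tilde q, 0)$. Since $\tilde r = 0$, the term $\langle A_h r, \tilde r\rangle - \langle A r, \tilde r\rangle$ in \eqref{eq: def B_h} drops out, so the momentum-balance rows of $\mathcal B$ and $\mathcal B_h$ produce the same equation. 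Subtracting the two yields the single identity
\begin{align*}
\langle \curl e_r, \tilde q \rangle_\Omega - \langle e_p, \div \tilde q \rangle_\Omega = 0, \qquad \forall \tilde q \in Q_h.
\end{align*}

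First I would establish $\curl r_h = \curl \hat r$. As $e_r \in R_h$ and $\curl R_h \subseteq Q_h$, the field $\curl e_r$ is itself admissible as a test function, so I may insert $\tilde q = \curl e_r$. Because the discrete spaces form a complex we have $\div \curl e_r = 0$, which annihilates the pressure contribution and leaves $\| \curl e_r \|^2 = 0$; hence $\curl e_r = 0$. The pressure invariant then follows at once: the identity collapses to $\langle e_p, \div \tilde q \rangle_\Omega = 0$ for all $\tilde q \in Q_h$, and the surjectivity $\div Q_h = P_h$ lets me choose $\tilde q$ with $\div \tilde q = e_p$, forcing $\| e_p \|^2 = 0$ and therefore $p_h = \hat p$.

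The point demanding the most attention is conceptual rather than computational: one must notice that neither the well-posedness of the two systems nor any quantitative property of the quadrature rule enters the argument, because the invariants are pinned down entirely by the momentum and mass-balance equations, which the perturbation leaves untouched. The only technical check is that $\curl e_r \in Q_h \subseteq H(\div, \Omega)$ has vanishing divergence as an element of $P_h$, which is immediate from the differential identity $\div \curl = 0$ and is exactly the complex structure underlying $\curl R_h \subseteq Q_h$ and $\div Q_h = P_h$.
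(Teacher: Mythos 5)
Your proposal is correct and follows essentially the same argument as the paper: subtract the two discrete systems tested against velocity-only test functions (so the quadrature perturbation vanishes), first take $\tilde q = \curl(r_h - \hat r)$ and use $\div \curl = 0$ to kill the curl error, then use the surjectivity $\div Q_h = P_h$ to kill the pressure error. The only difference is presentational — you state the subtracted identity once for all $\tilde q \in Q_h$ before specializing, whereas the paper specializes the test function directly in each step.
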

\begin{proof}
    First, since $\curl R_h \subseteq Q_h$, we can construct a function $\tilde q_r := \curl (\hat r - r_h)$. Setting $\tilde x := (0, \tilde q_r, 0)$, we consider the difference between systems \eqref{eqs: discrete full} and \eqref{eqs: discrete three-field} and use the fact that $\div \tilde q_r = 0$ to derive:
    \begin{align*}
        0 &= \langle \mathcal{B} \hat x - \mathcal{B}_h x_h, \tilde x \rangle
        = \langle \curl(\hat r - r_h), \tilde q_r \rangle_\Omega
        - \langle \hat p - p_h, \div \tilde q_r \rangle_\Omega
        = \| \curl(\hat r - r_h) \|^2.
    \end{align*}
    Secondly, since $\div Q_h = P_h$, we can construct a function $\tilde q_p$ such that $\div \tilde q_p = \hat p - p_h$. Setting now $\tilde x := (0, \tilde q_r, 0)$ and using $\curl(\hat r - r_h) = 0$, we obtain
    \begin{align*}
        0 &= \langle \mathcal{B} \hat x - \mathcal{B}_h x_h, \tilde x \rangle
        = 
        - \langle \hat p - p_h, \div \tilde q_p \rangle_\Omega
        = - \| \hat p - p_h \|^2. \qedhere
    \end{align*}
\end{proof}

\subsection{The two-dimensional case}
\label{sub:the_two_dimensional_case}

\begin{corollary} \label{cor: invariant vorticity}
        If $n = 2$, then the vorticity is unaffected by the quadrature rule $A_h$, i.e. $r_h = \hat r$.
\end{corollary}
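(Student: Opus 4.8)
The plan is to leverage the two invariants already established in \Cref{thm: invariants}, namely $\curl r_h = \curl \hat r$, and to reduce the claim to pinning down a single constant. In two dimensions the vorticity space $R_h = \mathbb{L}_1 \cap R$ consists of continuous, piecewise-linear \emph{scalar} fields, and on such fields the curl acts as the rotated gradient, $\curl s = [\partial_2 s, -\partial_1 s]^T$. Consequently $\curl(\hat r - r_h) = 0$ forces $\grad(\hat r - r_h) = 0$. Since $\hat r - r_h \in R_h$ is continuous and $\Omega$ is connected, I would conclude that $\hat r - r_h = c$ for some global constant $c$, and it then remains only to show $c = 0$.

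To pin the constant, I would test the difference of the two discrete systems \eqref{eqs: discrete full} and \eqref{eqs: discrete three-field} against $\tilde x := (c, 0, 0) \in X_h$, exactly as in the proof of \Cref{thm: invariants}. Because $c$ is constant, $\curl c = 0$, so both velocity-coupling terms $\langle \hat q, \curl c \rangle_\Omega$ and $\langle q_h, \curl c \rangle_\Omega$ vanish, and the identity $\langle \mathcal{B}\hat x - \mathcal{B}_h x_h, \tilde x \rangle = 0$ collapses to $\langle A\hat r, c \rangle - \langle A_h r_h, c \rangle = 0$, i.e. $\langle \mu^{-1}\hat r, c \rangle_\Omega = \langle \mu^{-1} r_h, c \rangle_h$.

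The crux is the observation that the constant field $c$ belongs to $W_h = \mathbb{P}_0^{k_n}$. The consistency property of the quadrature rule in \Cref{lem: equivalence} then applies with $w = c \in W_h$ and $r_h \in R_h \subseteq \mathbb{P}_1^{k_n}$, giving $\langle r_h, c \rangle_h = \langle r_h, c \rangle_\Omega$. Hence the quadrature inner product on the right collapses to the exact $L^2$ pairing, the correction term $\langle A_h r_h, c \rangle - \langle A r_h, c \rangle$ vanishes, and the identity reduces to $\langle \hat r - r_h, c \rangle_\Omega = 0$, that is $\| c \|^2 = 0$. Therefore $c = 0$ and $r_h = \hat r$.

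I expect the main obstacle to be conceptual rather than computational: recognizing that the kernel-of-curl reduction to a single global constant is genuinely two-dimensional, since it uses that $R_h$ is a scalar Lagrange space on which $\curl$ is injective modulo constants, and that this constant happens to lie precisely in the space $W_h$ on which the quadrature is exact. This last point is what allows the correction term to vanish and the argument to close cleanly, without any appeal to the boundary conditions on $\partial_q \Omega$.
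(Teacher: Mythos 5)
Your proof is correct and follows essentially the same route as the paper's: both deduce from \Cref{thm: invariants} and the rotated-gradient identity that $\hat r - r_h$ is a global constant, test the difference of the two discrete systems with that constant, and use the $W_h$-consistency of the quadrature rule from \Cref{lem: equivalence} to make the correction term vanish and conclude $\|\hat r - r_h\| = 0$. The only cosmetic difference is that you name the constant $c$ and phrase the final step as pinning it down, while the paper works directly with $\hat r - r_h$.
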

\begin{proof}
    Since $\curl(\hat r - r_h) = 0$ by \Cref{thm: invariants} and the curl is a rotated gradient in 2D, it follows that $\hat r - r_h$ is constant. Computing the difference of the two systems and setting now the test function as $\tilde x := (\hat r - r_h, 0, 0)$, we derive
    \begin{align*}
        0 &= \langle \mathcal{B} \hat x - \mathcal{B}_h x_h, \tilde x \rangle
        = \langle A \hat r - A_h r_h, \hat r - r_h \rangle
        - \langle \hat q - q_h, \curl (\hat r - r_h) \rangle_\Omega
        = \langle A (\hat r - r_h), \hat r - r_h \rangle
        = \mu^{-1} \| \hat r - r_h \|^2,
    \end{align*}
    where we used that $\hat r - r_h \in \mathbb{R} \subset W_h$ and \Cref{lem: equivalence}.
\end{proof}

In 2D, the definition of the curl and the use of $R_h = \mathbb{L}_1$ provide the following properties of the interpolants for sufficiently regular $r \in R$:
\begin{align} \label{eqs: 2D interpolants}
    \| (I - \Pi_W) r \|
    &\lesssim h~\| \curl r \|, &
    \| (I - \Pi_R) r \|
    &\lesssim h^2 ~\| r \|_{2, \Omega}.
\end{align}

\begin{lemma} \label{lem: }
    If $n = 2$, then $x_h$ converges quadratically to $\hat x$. In particular, $\enorm{ \hat x - x_h } = \| \hat q - q_h \| \lesssim h^2$.
\end{lemma}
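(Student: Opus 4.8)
The plan is to first collapse the energy norm down to a single velocity term and then estimate that term through the quadrature defect. By \Cref{cor: invariant vorticity} we have $\hat r = r_h$, by \Cref{thm: invariants} we have $\hat p = p_h$, and by \Cref{lem: Solenoidal} both $\hat q$ and $q_h$ are pointwise solenoidal, so $\div(\hat q - q_h) = 0$. Substituting these facts into \eqref{eq: enorm} immediately gives $\enorm{\hat x - x_h}^2 = \| \hat q - q_h \|^2$, which proves the claimed identity. It then remains to show $\| \hat q - q_h \| \lesssim h^2$, and I abbreviate $e_q := \hat q - q_h$.

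Next I would extract an error equation by subtracting the first row of \eqref{eqs: discrete three-field} from that of \eqref{eqs: discrete full}. Since the right-hand side associated with $f_r$ is identical in both systems and $r_h = \hat r$ in 2D, testing against $\tilde r \in R_h$ yields
\[
    \langle e_q, \curl \tilde r \rangle_\Omega
    = \mu^{-1} \left( \langle \hat r, \tilde r \rangle_\Omega - \langle \hat r, \tilde r \rangle_h \right),
    \qquad \forall \tilde r \in R_h,
\]
so that the velocity error is driven entirely by the quadrature defect $D(\hat r, \tilde r) := \langle \hat r, \tilde r \rangle_\Omega - \langle \hat r, \tilde r \rangle_h$. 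The key step is to convert this into a bound on $\| e_q \|$ itself. Because $e_q \in Q_h$ is solenoidal and $\Omega$ is simply connected, the discrete exactness of the sequence $\mathbb{L}_1 \xrightarrow{\curl} \mathbb{RT}_0 \xrightarrow{\div} \mathbb{P}_0$ lets me write $e_q = \curl s$ for some discrete potential $s \in R_h$. Choosing $\tilde r = s$ then produces $\| e_q \|^2 = \mu^{-1} D(\hat r, s)$.

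To bound the defect I would exploit the second identity in \Cref{lem: equivalence}: since $D(\cdot, w)$ and $D(w, \cdot)$ both vanish for any $w \in W_h$, bilinearity gives $D(\hat r, s) = D(\hat r - \Pi_W \hat r, s - \Pi_W s)$. A Cauchy-Schwarz inequality applied to both $\langle \cdot, \cdot \rangle_\Omega$ and $\langle \cdot, \cdot \rangle_h$, together with the elementwise norm equivalence $\| \cdot \|_h \eqsim \| \cdot \|$, then controls this by $\| \hat r - \Pi_W \hat r \| \, \| s - \Pi_W s \|$. Applying the sharper 2D interpolation estimate from \eqref{eqs: 2D interpolants} to each factor, namely $\| (I - \Pi_W) \hat r \| \lesssim h \| \curl \hat r \|$ and $\| (I - \Pi_W) s \| \lesssim h \| \curl s \| = h \| e_q \|$, I obtain $\| e_q \|^2 \lesssim \mu^{-1} h^2 \| \curl \hat r \| \, \| e_q \|$. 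Dividing by $\| e_q \|$ yields $\| e_q \| \lesssim h^2 \| \curl \hat r \|$, and since $\| \curl \hat r \| \le \enorm{\hat x}$ is bounded independently of $h$ by \Cref{lem: conv of 3field}, the quadratic rate follows.

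The main obstacle I anticipate is the representation $e_q = \curl s$: one must guarantee that the potential $s$ is genuinely discrete, $s \in R_h$, so that it is an admissible test function in the error equation, which rests on the discrete de Rham exactness (including compatibility of the boundary conditions) for the simply connected domain. A secondary technical point is that \eqref{eqs: 2D interpolants} is stated for sufficiently regular $r \in R$, so I would verify that it transfers to the discrete fields $\hat r, s \in R_h$; this holds by a local scaling argument on each simplex, where $\Pi_W$ reduces to the elementwise average of a linear field and the curl coincides in magnitude with the gradient. It is precisely the use of $\| \curl s \| = \| e_q \|$ in place of a full $H^1$ seminorm that feeds one factor of $\| e_q \|$ back into the estimate and upgrades the rate from linear to quadratic.
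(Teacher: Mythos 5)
Your proposal is correct and follows essentially the same route as the paper's own proof: you collapse the energy norm to $\| \hat q - q_h \|$ via \Cref{lem: Solenoidal}, \Cref{thm: invariants} and \Cref{cor: invariant vorticity}, test the difference of systems \eqref{eqs: discrete full} and \eqref{eqs: discrete three-field} with a discrete potential $s \in R_h$ satisfying $\curl s = \hat q - q_h$, and bound the resulting quadrature defect by inserting $(I - \Pi_W)$ in both arguments and invoking \eqref{eqs: 2D interpolants} together with the stability bound on the vorticity. The only distinction is cosmetic: you make explicit two technical points the paper leaves implicit, namely that discrete exactness guarantees $s \in R_h$ and that the interpolation estimates apply elementwise to the discrete fields.
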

\begin{proof}
    The equality follows from \Cref{lem: Solenoidal}, \Cref{thm: invariants} and \Cref{cor: invariant vorticity}. For the final estimate, we note that $\hat q - q_h$ is solenoidal, which allows us to take $\tilde r_q$ such that $\curl \tilde r_q = \hat q - q_h$. Setting $\tilde x := (\tilde r_q, 0, 0)$, we derive
    \begin{subequations} \label{eqs: proof 2D conv}
    \begin{align}
            0 &= \langle \mathcal{B} \hat x - \mathcal{B}_h x_h, \tilde x \rangle
            = \langle A \hat r - A_h r_h, \tilde r_q \rangle
            - \langle \hat q - q_h, \curl \tilde r_q \rangle_\Omega
            = \langle (A - A_h) r_h, \tilde r_q \rangle
            - \| \hat q - q_h \|^2.
        \end{align}
        Hence, we continue by using \eqref{eqs: 2D interpolants} to obtain the following bound
        \begin{align}
            \langle (A - A_h) r_h, \tilde r_q \rangle
            &= \langle (A - A_h) (I - \Pi_W) r_h, (I - \Pi_W) \tilde r_q \rangle
            \lesssim \| (I - \Pi_W) r_h \| ~\| (I - \Pi_W) \tilde r_q \|
            \lesssim h^2 \| \curl r_h \| ~ \| \hat q - q_h \|.
        \end{align}
        \end{subequations}
        Finally, we combine \eqref{eqs: proof 2D conv} and use the stability of the 3F-MFEM \cite[Lem.~3.4]{hanot2021arbitrary} to bound $\| \curl r_h \|$.
\end{proof}

\begin{theorem}[Improved estimate] \label{thm: improved estimate}
    If $n = 2$ and $r$ is sufficiently regular, then $r_h$ converges as $\| r_h - r \| \lesssim h^2$.
\end{theorem}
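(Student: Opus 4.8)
The plan is to reduce the claim to an estimate on the three-field solution $\hat x$ and then extract the gain entirely from structure, since the energy-norm bound of \Cref{lem: conv of 3field} alone is only linear. First I would invoke \Cref{cor: invariant vorticity} to replace $r_h$ by $\hat r$, so that it suffices to prove $\| \hat r - r \| \lesssim h^2$. I split this error through the canonical interpolant, writing $\hat r - r = e_r + (\Pi_R r - r)$ with $e_r := \hat r - \Pi_R r \in R_h$. The contribution $\| \Pi_R r - r \|$ is immediately controlled by $h^2 \| r \|_{2, \Omega}$ using the improved two-dimensional estimate in \eqref{eqs: 2D interpolants}, so the entire task is to show that the discrete component $e_r$ is also of order $h^2$.

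Second, I would write down the Galerkin orthogonality relations obtained by subtracting \eqref{eqs: discrete full} from the weak form \eqref{eqs: weak form} and testing against functions in $X_h$; in particular $\langle \mu^{-1}(\hat r - r), \tilde r \rangle_\Omega = \langle \hat q - q, \curl \tilde r \rangle_\Omega$ and $\langle \curl(\hat r - r), \tilde q \rangle_\Omega = \langle \hat p - p, \div \tilde q \rangle_\Omega$ for all $\tilde r \in R_h$ and $\tilde q \in Q_h$. The crucial intermediate step is to prove that $\curl e_r = 0$. For this I would test the second relation with the admissible choice $\tilde q := \curl e_r \in Q_h$ (using $\curl R_h \subseteq Q_h$); because $\div \curl e_r = 0$ the pressure term drops out, leaving $\langle \curl(\hat r - r), \curl e_r \rangle_\Omega = 0$. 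Invoking the commuting property $\langle \curl(I - \Pi_R) r, \tilde q \rangle_\Omega = 0$ for all $\tilde q \in Q_h$ then removes the interpolation part of $\curl(\hat r - r)$, yielding $\| \curl e_r \|^2 = 0$.

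Third, with $\curl e_r = 0$ in hand, I would test the first orthogonality relation with $\tilde r := e_r$; its right-hand side $\langle \hat q - q, \curl e_r \rangle_\Omega$ now vanishes, so $\langle \mu^{-1}(\hat r - r), e_r \rangle_\Omega = 0$. Decomposing $\hat r - r = e_r + (\Pi_R r - r)$ and applying Cauchy--Schwarz gives $\| e_r \| \le \| \Pi_R r - r \| \lesssim h^2 \| r \|_{2,\Omega}$. A final triangle inequality delivers $\| \hat r - r \| \le \| e_r \| + \| \Pi_R r - r \| \lesssim h^2$, which together with $r_h = \hat r$ proves the theorem.

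I expect the main obstacle to be the second step, namely establishing $\curl e_r = 0$: the gain over the linear rate must come from recognizing that $\curl e_r$ is itself a legitimate test function in $Q_h$, and that pairing the momentum residual with it annihilates both the pressure term (through $\div \curl = 0$) and the interpolation error (through the commuting diagram). Once this superconvergence identity for the curl is secured, the remaining $L^2$ bound on $e_r$ is routine. The one point to check carefully is that the improved rate $\| (I - \Pi_R) r \| \lesssim h^2 \| r \|_{2,\Omega}$ genuinely relies on the two-dimensional choice $R_h = \mathbb{L}_1$, which is precisely why the statement is restricted to $n = 2$.
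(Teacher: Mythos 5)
Your proof is correct and reaches the paper's conclusion by essentially the same route: Galerkin orthogonality for the 3F-MFEM, the commuting properties of the canonical interpolants, the quadratic interpolation bound in \eqref{eqs: 2D interpolants}, and \Cref{cor: invariant vorticity} to transfer the estimate from $\hat r$ to $r_h$. The difference is one of explicitness, and it works in your favor. The paper tests with the full composite function $\tilde x = \hat x - \Pi_X x$ and asserts that ``the off-diagonal components cancel''; read naively this is delicate, because the vorticity-mass term $\mu^{-1}\langle \hat r - r, \hat r - \Pi_R r\rangle_\Omega$ and the velocity term $-\langle \hat q - q, \curl(\hat r - \Pi_R r)\rangle_\Omega$ cancel \emph{each other} identically by the first error equation, so the paper's identity carries content only once one shows separately that the velocity term vanishes. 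That is exactly your second step: testing the second error equation with $\curl e_r \in Q_h$, $e_r := \hat r - \Pi_R r$ (legitimate since $\curl R_h \subseteq Q_h$), using $\div \curl = 0$ to remove the pressure term and the commuting property $\langle \curl (I - \Pi_R) r, \tilde q \rangle_\Omega = 0$ to remove the interpolation part, yielding $\curl(\hat r - \Pi_R r) = 0$. Your argument therefore makes rigorous precisely the step the paper compresses into one phrase. One small correction to your closing remark: the quadratic interpolation estimate is not the genuine reason for the restriction to $n = 2$; the degree-one N\'ed\'elec space of the second kind used for $n = 3$ also contains all linear polynomials, so an analogous $h^2$ interpolation bound holds there as well. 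What truly fails in 3D is \Cref{cor: invariant vorticity}: without $r_h = \hat r$, the quadrature perturbation $\| r_h - \hat r \|$ is only $O(h)$, and the improved rate for $\hat r$ cannot be transferred to $r_h$.
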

\begin{proof}
    We first consider the error equations for the 3F-MFEM with test function $\tilde x = \hat x - \Pi_X x$. Due to the properties of the canonical interpolants, the off-diagonal components cancel and we obtain:
    \begin{align*}
        0 &=
        \langle \mathcal{B} (\hat x - x), \tilde x \rangle
        = \mu^{-1} \langle \hat r - r, \hat r - \Pi_R r \rangle_\Omega
        = \mu^{-1} \left(\|\hat r - r \|^2 + \langle \hat r - r, r - \Pi_R r \rangle_\Omega \right).
    \end{align*}
    Next, \Cref{cor: invariant vorticity} gives us that $r_h = \hat r$ and so we conclude, using \eqref{eqs: 2D interpolants}, that
    \begin{align*}
        \| r_h - r \|
        &=\|\hat r - r \|
        = \frac{- \langle \hat r - r, (I - \Pi_R) r \rangle_\Omega}{\|\hat r - r \|}
        \le \| (I - \Pi_R) r \|
        \lesssim h^2 \| r \|_2. \qedhere
    \end{align*}
\end{proof}

\section{Numerical results} 
\label{sec:numerical_results}

In this section, we perform a convergence study for $n=2$ and 3 to verify the theoretical results of \Cref{sec:analysis}. The results are obtained with the library PorePy \cite{Keilegavlen2020} and PyGeoN \cite{pygeon}.
Let $\Omega :=
(0, 1)^n$, with the known solutions
\begin{align*}
    q(x, y) &= \curl
    (x^2 y^2 (x - 1)^2 (y - 1)^2), &
    p(x, y)
    &= xy(1 - x)(1 - y), &
    n &= 2, \\
    q(x, y, z) &=
    \curl
    \begin{bmatrix}
        (1 - x) x (1 - y)^2 y^2 (1 - z)^2 z^2 & 0 & 0
    \end{bmatrix}^T, &
    p(x, y, z)
    &= xyz(1 - x)(1 - y)(1 - z), &
    n &= 3.
\end{align*}
We set boundary conditions $q_0 = 0$ and $p_0 = 0$ on $\partial \Omega = \partial_p \Omega$, let $\mu := 1$ and compute the force term as $g = \div (2 \varepsilon q - pI)$.

\Cref{tab:err} compares MV-MFEM with 3F-MFEM in terms of numbers of degrees of freedom and relative $L^2$-errors. For fairness of comparison in the case $n = 3$, we choose $R_h$ in 3F-MFEM as the lowest-order N\'ed\'elec elements of the first kind, that has one degree of freedom per mesh edge.
The numbers
of degrees of freedom in MV-MFEM are nevertheless smaller due to the complete elimination of the vorticity.
The error
rates are in agreement with \Cref{teo:conv}, exhibiting at least linear convergence of
all variables with respect to $h$.
As proven in \Cref{thm: invariants}, we
observe numerically that the pressure, and therewith $\text{Error}_p$, is unaffected by the quadrature rule.

For $n = 2$, \Cref{cor: invariant vorticity} is reflected in the fact that the columns $\text{Error}_r$ are identical. The observed quadratic convergence in $r$ was shown in \Cref{thm: improved estimate}.
Finally, superconvergence of the pressure in the cell centers was observed (not
reported).

\begin{table}[htb]
    \centering
    \footnotesize\setlength{\tabcolsep}{4pt}
    \begin{tabular}{cc|ccccccc|ccccccc}
        & & \multicolumn{7}{|c|}{3F-MFEM}
        & \multicolumn{7}{|c}{MV-MFEM}\\
        & $h$ & $N_{\text{dof}}$ & $\text{Error}_r$ & $\text{Rate}_r$ & $\text{Error}_q$ & $\text{Rate}_q$ & $\text{Error}_p$ & $\text{Rate}_p$ & $N_{\text{dof}}$ & $\text{Error}_r$ & $\text{Rate}_r$ & $\text{Error}_q$ & $\text{Rate}_q$ & $\text{Error}_p$ & $\text{Rate}_p$ \\
        \hline
        \multirow{5}{*}{\rotatebox[origin=c]{90}{$n = 2$}}
        & 6.42e-2 & 1.91e3 & 7.73e-3 & -    & 6.62e-2 & -    & 1.12e-1 & -    & 1.57e3   & 7.73e-3 & -    & 6.95e-2 & -    & 1.12e-1 & -    \\
        & 3.17e-2 & 7.33e3 & 1.94e-3 & 1.96 & 3.28e-2 & 0.99 & 5.67e-2 & 0.97 & 6.06e3   & 1.94e-3 & 1.96 & 3.33e-2 & 1.04 & 5.67e-2 & 0.97 \\
        & 1.57e-2 & 2.88e4 & 4.86e-4 & 1.96 & 1.65e-2 & 0.98 & 2.84e-2 & 0.98 & 2.39e4  & 4.86e-4 & 1.96 & 1.65e-2 & 0.99 & 2.84e-2 & 0.98 \\
        & 7.83e-3 & 1.14e5 & 1.22e-4 & 1.99 & 8.26e-3 & 0.99 & 1.42e-2 & 0.99 & 9.52e4  & 1.22e-4 & 1.99 & 8.27e-3 & 1.00  & 1.42e-2 & 0.99 \\
        & 3.91e-3 & 4.56e5 & 3.05e-5 & 2.00 & 4.13e-3 & 1.00 & 7.12e-3 & 1.00 & 3.80e5 & 3.05e-5 & 2.00    & 4.13e-3 & 1.00 & 7.12e-3 & 1.00 \\
        \hline
        \hline
        \multirow{5}{*}{\rotatebox[origin=c]{90}{$n = 3$}}
        & 1.81e-1& 1.59e4 & 2.30e-1 & -     & 1.43e-1   & -    & 3.27e-1 & -  & 1.10e4  & 1.42e-1 &   -  & 1.68e-1 & -    & 3.27e-1 & -    \\
        & 1.65e-1& 2.11e4 & 2.11e-1 & 0.90  & 1.32e-1 & 0.83  & 2.92e-1 & 1.19 & 1.46e4 & 1.25e-1 & 1.34 & 1.48e-1 & 1.31 & 2.92e-1 & 1.19 \\
        & 1.48e-1& 2.86e4 & 1.89e-1 & 1.03  & 1.17e-1 & 1.13  & 2.58e-1 & 1.16 & 1.99e4 & 1.08e-1 & 1.32 & 1.28e-1 & 1.35 & 2.58e-1 & 1.16 \\
        & 1.36e-1& 3.62e4 & 1.73e-1 & 1.03  & 1.07e-1 & 1.05  & 2.38e-1 & 0.90 & 2.52e4 & 1.00e-1 & 0.90 & 1.16e-1 & 1.15 & 2.38e-1 & 0.90 \\
        & 1.27e-1& 4.43e4 & 1.61e-1 & 1.00  & 1.01e-1 & 0.83  & 2.22e-1 & 1.06 & 3.09e4 & 9.28e-2 & 1.14 & 1.09e-1 & 0.97 & 2.22e-1 & 1.06
    \end{tabular}
    \caption{Relative $L^2(\Omega)$-errors and convergence rates for the vorticity $r$, velocity $q$, and pressure $p$. The presented results are in agreement with the developed theory and show the
potential of the proposed approach.}
    \label{tab:err}
\end{table}

\section*{Acknowledgments}
\label{sec:acknowledgments}

This project has received funding from the European Union's Horizon 2020 research and innovation programme under the Marie Skłodowska-Curie grant agreement No. 101031434 -- MiDiROM.

\bibliographystyle{elsarticle-num}
\bibliography{references}
\end{document}